\def\today{\ifcase\month\or
  January\or February\or March\or April\or May\or June\or
  July\or August\or September\or October\or November\or December\fi
  \space\number\day, \number\year}
 \newtheorem{theorem}{Theorem}
 \newtheorem{lemma}[theorem]{Lemma}
 \theoremstyle{definition}
 \theoremstyle{remark}
 \newcommand{\C}{\mathbb{C}}
 \newcommand{\R}{\mathbb{R}}
\newcommand{\tf}{\widehat{f}}
 \newcommand{\du}{\text{\rm d}u}
 \newcommand{\dv}{\text{\rm d}v}
 \newcommand{\dx}{\text{\rm d}x}
 \newcommand{\dy}{\text{\rm d}y}
\newcommand{\im}{{\rm Im}\,}
\newcommand{\re}{{\rm Re}\,}
\def\={\;=\;}  \def\:{\;:=\;}  \def\+{\,+\,}  \def\m{\,-\,}    \def\t{\hskip .5  pt}  \def\tt{\hskip 1  pt}  
 \def\tf{\tfrac}
\begin{document}
\title{On Littlewood's estimate for the modulus of the \\ zeta function  on the critical line}

\author[Carneiro and Milinovich]{Emanuel Carneiro and Micah B. Milinovich}
\subjclass[2020]{11M06, 11M26, 41A30}
\keywords{Riemann zeta-function, bandlimited approximations}

\address{
The Abdus Salam International Centre for Theoretical Physics,
Strada Costiera, 11, I - 34151, Trieste, Italy}

\email{carneiro@ictp.it}

\address{Department of Mathematics, University of Mississippi, University, MS 38677 USA}

\email{mbmilino@olemiss.edu}

\allowdisplaybreaks
\numberwithin{equation}{section}

\maketitle  

\begin{abstract}
Inspired by a result of Soundararajan, assuming the Riemann hypothesis (RH), we prove a new inequality for the logarithm of the modulus of the Riemann zeta-function on the critical line in terms of a Dirichlet polynomial over primes and prime powers. Our proof uses the Guinand-Weil explicit formula in conjunction with extremal one-sided bandlimited approximations for the Poisson kernel. As an application, by carefully estimating the Dirichlet polynomial, we revisit a 100-year-old estimate of Littlewood and give a slight refinement of the sharpest known upper bound (due to Chandee and Soundararajan) for the modulus of the zeta function on the critical line assuming RH, by providing explicit lower-order terms.
\end{abstract}


\section{Introduction}
Let $\zeta(s)$ denote the Riemann zeta-function. Assuming the Riemann hypothesis (RH), a classical estimate of Littlewood \cite{Littlewood} from 1924 states that, for sufficiently large $t$, there is a constant $C>0$ such that
\[
\big|\zeta(\tfrac{1}{2}+it)\big| \ll \exp\Big( C \frac{\log t}{\log\log t} \Big).
\]
In the past 100 years, no improvement on this estimate has been made apart from reducing the permissible values of $C$. With Littlewood's estimate in mind, in this paper we prove a variation of an inequality for $\log|\zeta(\frac{1}{2}+it)|$ due to Soundararajan in terms of a Dirichlet polynomial over the primes and prime powers. Assuming RH, for sufficiently large $t$, Soundararajan \cite[Main Proposition]{Sound} proved that
\begin{equation} \label{Sound_ineq}
\log\big|\zeta(\tfrac{1}{2}+it)\big| \le \mathrm{Re} \sum_{n \le x} \frac{\Lambda(n)}{n^{1/2+\lambda/\log x +it}\log n} \frac{\log(x/n)}{\log x} + \frac{(1\!+\!\lambda)}{2} \frac{\log t}{\log x} + O\left( \frac{1}{\log x} \right),
\end{equation}
for $2 \le x \le t^2$ and $\lambda \ge \lambda_0 = 0.4912 \ldots$, where $\lambda_0$ denotes the unique positive real number satisfying $e^{-\lambda_0} = \lambda_0+\lambda_0^2/2$. Here, as usual, we let $\Lambda(n)$ denote the von Mangoldt function defined to be $\log p$, if $n=p^m$ with $p$ a prime number and $m\geq 1$ an integer, and to be zero otherwise. 

\smallskip

The usefulness of Soundararajan's inequality \eqref{Sound_ineq} is that there is considerable flexibility in choosing the parameter $x$. For example, assuming RH, choosing $x=(\log t)^{2-\varepsilon}$ for any $\varepsilon>0$, and estimating the sum over $n$ trivially, Soundararajan \cite[Corollary C]{Sound} deduced that
\[
\big|\zeta(\tfrac{1}{2}+it)\big| \ll \exp\!\left( \bigg( \frac{1\!+\!\lambda_0}{4} +o(1) \bigg) \frac{\log t}{\log\log t} \right) \le \exp\!\left( \frac{3}{8} \frac{\log t}{\log\log t} \right)
\]
for sufficiently large $t$. At the time, this was the sharpest known version of Littlewood's result, improving on earlier work of Ramachandra and Sankaranarayanan \cite{RamSank}. The flexibility in the parameter $x$ in \eqref{Sound_ineq} also allowed Soundararajan \cite[Main Theorem]{Sound} to study the frequency of large values of $|\zeta(\frac{1}{2}+it)|$ and allowed Harper \cite[Theorem 1]{Harper} to give sharp upper bounds for $2k$-th moment of the zeta function on the critical line; see also \cite[Corollary A]{Sound}. An overview of these (and other related) ideas concerning the distribution of values of zeta and $L$-functions can be found in the recent survey article of Soundararajan \cite{Sound2}. 

\smallskip

In \cite{Sound}, Soundararajan asked for an upper bound for $|\zeta(\frac{1}{2}+it)|$ on RH that attained the limit of existing methods. Using the Guinand-Weil explicit formula, it was shown in \cite{CS} that this problem could be framed in terms of bandlimited minorants of the function $\log((4+x^2)/x^2)$. Assuming RH, drawing upon the work of Carneiro and Vaaler \cite{CV}, Chandee and Soundararajan \cite[Theorem 1.1]{CS} use the optimal such minorants to prove that 
\begin{equation} \label{20231227_20:31}
\log\big|\zeta(\tfrac{1}{2}+it)\big| \leq \left( \frac{\log 2}{2} + o(1)\right) \frac{\log t }{\log \log t}\,,
\end{equation}
as $t \to \infty$. They initially proved that the term of $o(1)$ is $O(\log \log \log t/ \log \log t)$ but it was later observed in \cite{CC} and \cite{CCM} that the term of $o(1)$ can be taken to be $O(1/\log \log t)$. 

\smallskip

The goal of this note is to use the Guinand-Weil explicit formula, in conjunction with extremal bandlimited majorants and minorants for the Poisson kernel constructed in \cite{CCM, CLV}, to give an analogue of Soundararajan's inequality  for $\log|\zeta(\frac{1}{2}+it)|$ in \eqref{Sound_ineq} and then to use this new inequality to give a slight refinement of Chandee and Soundararajan's bound in \eqref{20231227_20:31}. For example, assuming RH, we show that
\[
\log\big|\zeta(\tfrac{1}{2}+it)\big| \leq \frac{\log 2}{2} \frac{\log t }{\log \log t} + \bigg(\frac{\log 2}{2} + \log^2 2\bigg) \frac{\log t }{(\log \log t)^2} + O\!\left( \frac{\log t }{(\log \log t)^3} \right),
\]
as $t \to \infty$. Moreover, our new analogue of \eqref{Sound_ineq} has an explicit weight function in the Dirichlet polynomial over primes and prime powers, and it maintains the flexibility in the parameter $x$. So it potentially remains useful in applications such as studying the frequency of large values of $|\zeta(\frac{1}{2}+it)|$. 

\smallskip

In order to state our results, with the range $0 \leq u <1$ in mind, we define the following special function:
\begin{equation}\label{20231127_13:35}
F(u) :=  \int_0^\infty  \frac{\sinh( 2 u y)}{\cosh^2 (y)} \dy = \frac{\pi u}{\sin(\pi u)} - u \left( \frac{\Gamma'}{\Gamma}\Big( \frac{u+1}{2} \Big)-\frac{\Gamma'}{\Gamma}\Big( \frac{u}{2} \Big) \right) +1.
\end{equation}
The identity in \eqref{20231127_13:35} follows from \cite[Eq.~3.512.1 and 3.541.8]{GR}. Note that $F(0)=0$ and that $F$ is in fact analytic in the open ball of radius $1$ centered at $u=0$, with simple poles at $u = \pm1$. Using standard facts about the gamma function and the cosecant function (e.g., \cite[Eq.~1.411.11 and 8.374]{GR}), one has the series expansion
\begin{equation}\label{series}
F(u) = 2 \log 2 \cdot u + 2 \sum_{k=1}^\infty \Big(1\!-\!\frac{1}{2^{2k}}\Big) \, \zeta(2k\!+\!1) \, u^{2k+1}, \quad \text{for } |u|<1.
\end{equation}
Our main result is the following inequality.

\begin{theorem}\label{new_Sound}  Assume RH. For $t\ge 10$ and $x\ge 2$, we have
\begin{equation}\label{main_ineq}
\begin{split}
\log\big|\zeta(\tfrac{1}{2}+it)\big| &\le  \mathrm{Re} \sum_{n\leq x}\dfrac{\Lambda(n)}{n^{1/2+it} \log x} \, F\!\left( \frac{\log(x/n)}{\log x} \right) + \log 2 \cdot \frac{\log t}{\log x}    + O\!\left( \frac{\sqrt{x}\log x}{t} 
+ 1 \right).
\end{split}
\end{equation}
\end{theorem}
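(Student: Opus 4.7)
The plan is to combine the Guinand--Weil explicit formula with the extremal one-sided bandlimited majorants of the Poisson kernel constructed in \cite{CCM, CLV}, in direct analogy with the Chandee--Soundararajan proof of \eqref{20231227_20:31}, which used bandlimited minorants of $\log((4+x^2)/x^2)$. The advantage of working directly with the Poisson kernel is that its Fourier bandwidth couples naturally with a free truncation parameter $x$, thereby restoring the flexibility in $x$ present in Soundararajan's inequality \eqref{Sound_ineq}.

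First, assuming RH, I would express $\log|\zeta(\tfrac12+it)|$ as an integral over $\sigma>0$ of Poisson-type sums over the nontrivial zeros. From the partial-fraction expansion of $\zeta'/\zeta$,
\[
\mathrm{Re}\,\frac{\zeta'}{\zeta}(\tfrac12+\sigma+it) = \sum_{\gamma}\frac{\sigma}{\sigma^2+(t-\gamma)^2} + E_\infty(\sigma,t),
\]
where $E_\infty$ absorbs the contribution from the Gamma factor and from the pole at $s=1$ of the completed zeta function. Integrating in $\sigma$ then produces $\log|\zeta(\tfrac12+it)|$ as a zero-sum weighted by an integrated Poisson kernel, plus an archimedean piece to be controlled via Stirling. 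Next, setting $\delta = \log x /(2\pi)$, I would take $M$ to be the extremal one-sided bandlimited majorant (with Fourier support in $[-\delta,\delta]$) of the relevant Poisson-type kernel from \cite{CCM, CLV}. Applying the Guinand--Weil explicit formula with test function $\gamma \mapsto M(t-\gamma)$, the band-limit on $M$ truncates the prime side to $n\le x$ automatically and yields a term proportional to $\sum_{n\le x}\Lambda(n)\,n^{-1/2-it}\,\widehat{M}(\log n/(2\pi))$; Stirling on the archimedean side yields the main term $\log 2 \cdot \log t/\log x$; and the residual contributions are absorbed into $O(\sqrt{x}\log x /t + 1)$. Taking real parts, the majorant inequality $M\ge K$ converts the identity into the claimed upper bound for $\log|\zeta(\tfrac12+it)|$.

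The principal obstacle is the explicit identification of $\widehat{M}(\log n/(2\pi))$, properly normalized, with the weight $F(\log(x/n)/\log x)/\log x$ appearing in \eqref{main_ineq}, where $F$ is defined in \eqref{20231127_13:35}. The $\sinh(2uy)/\cosh^2(y)$ integrand of $F$ strongly suggests that the underlying extremal function arises from an $L^1$-interpolation at lattice nodes with $\mathrm{sech}^2$-type weights, firmly within the Beurling--Selberg tradition developed in \cite{CCM, CLV}. A secondary technical hurdle is carrying Stirling's expansion to sufficient order to isolate the clean leading constant $\log 2$ in the archimedean main term while depositing the remainder into $O(\sqrt{x}\log x/t+1)$. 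Once these two computations are in hand, the remaining estimates (tail bounds on the integrated Poisson kernel, control of the $L^1$-defect $M-K$, and the trivial evaluations from the gamma side) are routine.
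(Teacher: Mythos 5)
Your high-level framework (Guinand--Weil plus extremal bandlimited approximations of the Poisson kernel, followed by Stirling on the archimedean side) is the right one and agrees with the paper's strategy, but two concrete points in your execution are off, and the second one is exactly the obstacle you acknowledge not being able to resolve.

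First, a sign issue. Starting from
\begin{equation*}
\log\big|\zeta(\tfrac12+it)\big| \;=\; -\int_0^1 \mathrm{Re}\,\frac{\zeta'}{\zeta}(\tfrac12+\sigma+it)\,\mathrm{d}\sigma + O(1), \qquad
\mathrm{Re}\,\frac{\zeta'}{\zeta}(\tfrac12+\sigma+it) = -\tfrac12\log\tfrac{t}{2\pi} + \sum_\gamma h_\sigma(t-\gamma) + O(\tfrac1t),
\end{equation*}
the zeros sum enters $\log|\zeta|$ with a \emph{negative} sign, so an upper bound on $\log|\zeta(\tfrac12+it)|$ requires a \emph{lower} bound on $\sum_\gamma h_\sigma(t-\gamma)$, hence a bandlimited \emph{minorant} $m^-_{\sigma,\Delta}\le h_\sigma$ (after which Guinand--Weil is applied to $\sum_\gamma m^-_{\sigma,\Delta}(t-\gamma)$). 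You say ``majorant $M\ge K$,'' which produces the inequality pointing the wrong way.

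Second, and more fundamentally, your order of operations is reversed relative to the paper, and this is precisely why you cannot identify the weight with $F$. You propose to integrate the Poisson kernel over $\sigma\in[0,1]$ first, obtaining the single integrated kernel $\frac12\log\frac{1+x^2}{x^2}$, and then to find one extremal bandlimited approximation for it. That is essentially the Chandee--Soundararajan route, and it yields the non-explicit weight $W(n;x)$ discussed in Remark (ii) of the paper, not $F$. The paper instead fixes the bandwidth $\Delta=\log x/(2\pi)$, replaces $h_\beta$ by its extremal minorant $m^-_{\beta,\Delta}$ \emph{at each level} $\beta$, applies Guinand--Weil to get the upper bound of Lemma \ref{th1}, and only \emph{then} integrates over $\beta\in[0,1]$. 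The weight $F$ then falls out because
\begin{equation*}
\widehat{m}^-_{\beta,\Delta}\!\left(\tfrac{\log n}{2\pi}\right) \;=\; \frac{\pi\,\sinh\!\big(\beta\log(x/n)\big)}{2\cosh^2\!\big(\tfrac{\beta}{2}\log x\big)},
\end{equation*}
and integrating this over $\beta\in[0,1]$ (extended to $\beta\in[0,\infty)$ at the cost of an $O(1/(n\log n))$ tail, then substituting $y=\tfrac{\beta}{2}\log x$) yields $\frac{\pi}{\log x}\,F\!\big(\tfrac{\log(x/n)}{\log x}\big)$, with $F$ precisely as defined in \eqref{20231127_13:35}. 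Your guess that $F$ reflects an ``$L^1$-interpolation at lattice nodes with $\mathrm{sech}^2$-type weights'' underlying some exotic single extremal function is off the mark: the $\sinh/\cosh^2$ structure is simply the explicit Fourier transform from Lemma \ref{lemma_extr_Poisson}(v) integrated in $\beta$, and the archimedean constant $\log 2$ similarly arises from $\int_0^\infty \frac{\mathrm{d}u}{x^u+1}=\frac{\log 2}{\log x}$ after combining $\widehat{m}^-_{\beta,\Delta}(0)$ with Stirling, not from Stirling alone.
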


\noindent {\sc Remarks.} 

(i) For $2 \le n \le x$, we have the bounds
\begin{equation}\label{weight_ineq}
0 \le \frac{1}{\log x} \cdot F\!\left( \frac{\log(x/n)}{\log x} \right) \le \frac{1}{\log n} - \frac{1}{\log (x^2/n)} \leq\min\left\{ \frac{1}{\log n}, \frac{2 \, \log(x/n) }{\log^2 n}\right\}.
\end{equation}
To see this, note that by using the elementary inequality $\cosh(x) \ge e^{x}/2$ for $x\ge0$ in the integral formulation of $F$, one arrives at the bound
\begin{equation}\label{20231129_16:28}
F(u) \le 2 \int_0^\infty \left(e^{-2(1-u)x} + e^{-2(1+u)x} \right) \dx =\frac{1}{(1-u)} - \frac{1}{(1+u)} = \frac{2u}{1 - u^2}.
\end{equation}
Then, \eqref{weight_ineq} plainly follows from \eqref{20231129_16:28}.

\smallskip

(ii) A similar inequality appears in the work of Chandee and Soundararajan \cite[Equation (5)]{CS}. Letting $x=e^{2 \pi \Delta}$ in their work, for $t\ge 10$ and $x\ge 2$, they show that
\[
\log\big|\zeta(\tfrac{1}{2}+it)\big| \le  \mathrm{Re} \sum_{n\leq x}\dfrac{\Lambda(n)}{n^{1/2+it}} \, W(n;x) + \frac{\log t}{\log x} \log\Big( \frac{2}{1+x^{-2}} \Big)    + O\!\left( \frac{\sqrt{x}\log x}{t} + 1 \right),
\]
where the weight function $W(n;x)$ is not given explicitly but is shown to satisfy the bound $|W(n;x)| \ll 1$ for $2\le n\le x$. This suffices for the pointwise bound for $|\zeta(\tfrac{1}{2}+it)\big|$ in \eqref{20231227_20:31} but is not amenable to applications such as studying the distribution of large values of $\zeta(s)$ on the critical line.

\smallskip

Using Theorem \ref{new_Sound}, we obtain a slight refinement of \eqref{20231227_20:31} with lower-order terms.

\begin{theorem}\label{lower-order}
Assume RH. For $t\ge 10$ and any integer $K\ge 4$, we have
\[
\begin{split}
\log\big|\zeta(\tfrac{1}{2}+it)\big| & \, \le \, \frac{\log 2}{2} \frac{\log t }{\log \log t} +  \bigg(\frac{\log 2}{2} + \log^2 2\bigg) \frac{\log t }{(\log \log t)^2} + \Big( 2 \log^2 2 + 2 \log^3 2 \Big) \frac{\log t }{(\log \log t)^3}
\\
&\quad \qquad + \sum_{k=4}^K C_k \cdot \frac{\log t}{(\log \log t)^{k}} + O_K\bigg( \frac{\log t }{(\log \log t)^{K+1}}  \bigg),
\end{split}
\]
where the constants $C_k$ are effectively computable as described in \S \ref{Optimal_x}.

\end{theorem}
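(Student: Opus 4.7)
The strategy is to apply Theorem~\ref{new_Sound} with a carefully optimized $x=x(t)$, estimate the resulting Dirichlet polynomial via the series expansion \eqref{series} of $F$ combined with PNT under RH, and then Taylor expand in powers of $1/\log\log t$.

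First, since $F\ge 0$ on $[0,1)$ (visible from the integral representation in \eqref{20231127_13:35}) and $|n^{it}|=1$, taking absolute values in Theorem~\ref{new_Sound} gives
\[
\log\big|\zeta(\tfrac12+it)\big| \,\le\, S(x) \,+\, \log 2\cdot\frac{\log t}{\log x} \,+\, O\!\left(\frac{\sqrt{x}\log x}{t}+1\right),
\]
where $S(x):=\sum_{n\le x}\frac{\Lambda(n)}{\sqrt n\,\log x}\,F\!\left(\frac{\log(x/n)}{\log x}\right)$. The contribution to $S(x)$ from prime powers $p^k$, $k\ge 2$, is $O(\log\log\log t)$ by Mertens and can be absorbed in the eventual error. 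For the prime part, I would apply Stieltjes integration against $\theta(u)=u+O(\sqrt u(\log u)^2)$ (valid on RH) and then substitute $u=xe^{-2w}$, which transforms the sum, up to admissible errors, into
\[
\frac{2\sqrt x}{\log x}\int_0^{(\log x-\log 2)/2}\!F\!\left(\tfrac{2w}{\log x}\right)e^{-w}\,dw.
\]
Inserting the series \eqref{series} for $F$ (which converges uniformly on the relevant range since $2w/\log x<1$) and integrating term by term via $\int_0^\infty w^{2j+1}e^{-w}dw=(2j+1)!$ yields the asymptotic expansion
\[
S(x)\,\sim\,\frac{4\sqrt x}{(\log x)^2}\sum_{j\ge 0}c_j\,\frac{4^{j}(2j+1)!}{(\log x)^{2j}},\qquad c_0=2\log 2,\ \ c_j=2(1-2^{-2j})\zeta(2j+1).
\]

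Next I would choose $x$ to balance the two sides. Minimizing the dominant pieces $\log 2\cdot\log t/\log x+8\sqrt x\log 2/(\log x)^2$ forces $\sqrt x\approx \log t/4$, i.e., $\log x\approx 2\log\log t-4\log 2$. With this (or a slightly refined) choice described in \S\ref{Optimal_x}, I would Taylor expand both $\log 2\cdot\log t/\log x$ and the series for $S(x)$ in powers of $1/\log\log t$. A direct calculation then gives, at the first three orders,
\[
\frac{\log 2}{2},\qquad \frac{\log 2}{2}+\log^2 2,\qquad 2\log^2 2+2\log^3 2,
\]
which match the statement; all subsequent $C_k$ are then read off as explicit rational combinations of $\log 2$ and the odd zeta values $\zeta(2j+1)$.

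The main obstacle is controlling all remainders uniformly to order $\log t/(\log\log t)^{K+1}$. In particular one must verify that (i) the RH-strength PNT error contributes $O(\sqrt{x}\cdot\mathrm{polylog})$, which with $\sqrt x\asymp\log t$ is dominated by the target threshold only because the polylog is small; (ii) truncating the $F$-expansion at $j\lesssim K/2$ incurs a negligible tail (controlled by the $e^{-w}$ factor); and (iii) the sensitivity of the final bound to a small multiplicative perturbation of $x$ around its optimum is of order $\log t/(\log\log t)^{K+1}$, so that near-optimal choices suffice to extract all $C_k$ effectively.
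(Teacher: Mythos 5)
Your overall strategy is essentially identical to the paper's: both start from Theorem~\ref{new_Sound}, bound the Dirichlet polynomial using the power series expansion of $F$ together with the RH-strength prime number theorem and partial summation, and then optimize the choice $\log x \approx 2\log\log t - 4\log 2$. Your asymptotic expansion for $S(x)$ coincides term-by-term with the paper's inequality~\eqref{ineq2} (your $\frac{4\sqrt x}{(\log x)^2}\sum_{j\ge 0}c_j\,\frac{4^{j}(2j+1)!}{(\log x)^{2j}}$ rewrites exactly to $\frac{(8\log 2)\sqrt x}{(\log x)^2} + 8\sqrt x\sum_{k\ge 1}(2^{2k}-1)\frac{(2k+1)!\,\zeta(2k+1)}{(\log x)^{2k+2}}$); the only cosmetic difference is that you integrate $F$ after the substitution $u = xe^{-2w}$, whereas the paper integrates $F'$ after $v=\log u/\log x$.

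There is, however, a genuine error in your item~(i). You assert that the RH-strength PNT error contributes $O(\sqrt{x}\cdot\mathrm{polylog})$ and that this is ``dominated by the target threshold only because the polylog is small.'' That cannot be right: with $\sqrt{x}\asymp\log t$, a quantity of size $\sqrt{x}\cdot\mathrm{polylog}(x)$ is at least of order $\log t$, which is \emph{larger} than even the leading term $\frac{\log 2}{2}\frac{\log t}{\log\log t}$; if the error were really that large, the whole method (not just the lower-order terms) would collapse. What you are missing is that after substituting $\theta(u)=u+E(u)$, $E(u)=O(\sqrt u\log^2 u)$, into the Stieltjes integral $\int_2^x u^{-1/2}G(u)\,d\theta(u)$ with $G(u)=\frac{1}{\log x}F\!\big(\frac{\log(x/u)}{\log x}\big)$, one integrates by parts the error piece $\int u^{-1/2}G(u)\,dE(u)$: the boundary term at $u=x$ vanishes since $F(0)=0$, the one at $u=2$ is $O(1)$, and in the remaining integral the $\sqrt u$ in $E(u)$ cancels the $u^{-1/2}$, leaving a contribution $\ll\int_2^x\log^2 u\,\big|d\big(u^{-1/2}G(u)\big)\big|\ll(\log x)^2$. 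So the PNT error is only $O((\log\log t)^2)$, which is negligible for every fixed $K$. (The paper reaches the same conclusion via $\sum_{n\le x}\Lambda(n)/\sqrt n=2\sqrt x+O(\log^3 x)$ followed by summation by parts, and additionally restricts the resulting $v$-integral to $[\tfrac12,1]$ at a cost of $O(x^{1/4}\log x)$ so that the series for $F'$ converges uniformly.) Once this is corrected your argument is sound. One very minor inaccuracy: the coefficients $C_k$ are polynomials with rational coefficients in $(\log 2)^{\pm1}$ and the odd zeta values, not merely in $\log 2$ and $\zeta(2j+1)$ --- for instance, $C_6$ contains a term proportional to $\zeta(3)^2/\log 2$.
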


\noindent {\sc Remark.} Within our setup, there are different ways to arrive at an inequality of the form 
\begin{equation*}
\log\big|\zeta(\tfrac{1}{2}+it)\big|  \leq \sum_{k=1}^K C_k \cdot \frac{\log t}{(\log \log t)^{k}} + O_K\bigg( \frac{\log t }{(\log \log t)^{K+1}}  \bigg)\,,
\end{equation*}
with $C_1 = (\log 2)/2$ and each $C_k$ explicit for $2 \le k \leq K$; see, for instance, equation \eqref{20240102_13:27}. We go one step further and address the problem of doing this in an optimal way (within our framework). Running our process to obtain the values of $C_k$, in addition to the values of $C_1, C_2$, and $C_3$ stated in Theorem \ref{lower-order}, we arrive at (set $L:= \log 2$)
\[
C_4 =-L+ 6 L^3 + 4 L^4 + \dfrac{9 \, \zeta(3)}{4}\, ; \quad C_5 = -\frac{4 L}{3} - 8 L^2 + 16 L^4 + 8 L^5+ \dfrac{9 \, \zeta(3)}{2} + 18 \, \zeta(3) L\, ; 
\]
and
\[
C_6 = \frac{4L}{3} - \frac{40 L^2}{3}  - 40 L^3 +  40 L^5 + 16L^6 
 +  \bigg(- 9 + 45 L + 90 L^2 - \frac{81 \, \zeta(3)}{16 L} \bigg) \zeta(3) + \dfrac{225 \, \zeta(5)}{4}.
\]

\section{Preliminary lemmas}
We denote a generic non-trivial zero of $\zeta(s)$ by $\rho$ and, assuming RH, we write $\rho = \frac12 + i \gamma$. Unconditionally, for $s\ne 1$ and $s$ not coinciding with a zero of $\zeta(s)$, the partial fraction decomposition for $\zeta'(s)/\zeta(s)$ (cf.~\cite[Chapter 12]{Dav}) states that
\begin{align*}
	\dfrac{\zeta'}{\zeta}(s) & = \displaystyle\sum_{\rho}\bigg(\dfrac{1}{s-\rho}+\frac{1}{\rho}\bigg)-\dfrac{1}{2}\dfrac{\Gamma'}{\Gamma}\bigg(\dfrac{s}{2}+1\bigg)+B+\frac{1}{2}\log\pi - \frac{1}{s-1},
	\end{align*}
where $B=-\sum_{\rho}\re(1/\rho)$. Assuming RH, for $\beta > 0$ and $t \ge 1$, using Stirling's formula for the gamma function, one obtains
\begin{align}\label{partial_fraction_dec}
	\mathrm{Re} \, \frac{\zeta'}{\zeta}(\tfrac{1}{2}+\beta+it) = - \dfrac{1}{2}\log\frac{t}{2\pi} + \sum_{\gamma} h_\beta(t-\gamma)+O\bigg(\frac{1}{t}\bigg), 
\end{align}
where 
\begin{equation}\label{Def_Poisson_kernel_beta}
h_{\beta}(x) := \frac{\beta}{\beta^2 + x^2}
\end{equation}
is the Poisson kernel.

\subsection{Extremal bandlimited approximations} Recall that an entire function $f: \C \to \C$ is said to be of {\it exponential type} if 
$$\tau(f) := \limsup_{|z| \to \infty} \,|z|^{-1}  \log |f(z)|  < \infty.$$
In this case, the number $\tau(f)$ is called the {\it exponential type} of $f$. An entire function $f:\C \to \C$ is said to be {\it real entire} if its restriction to $\R$ is real-valued. If $f \in L^1(\R)$ our normalization for the Fourier transform is as follows:
$$\widehat{f}(\xi) = \int_{-\infty}^{\infty} f(x) \,e^{-2 \pi i x \xi}\,\dx.$$
This extends to a unitary operator in $L^2(\R)$ and functions that have compactly supported Fourier transform are called {\it bandlimited} functions. In this context, the classical Paley-Wiener theorem is a result that serves as a bridge between complex analysis and Fourier analysis. It says that, for $f \in L^2(\R)$, the following two conditions are equivalent: (i) ${\rm supp}(\widehat{f}) \subset [-\Delta, \Delta]$ and; (ii) $f$ is equal a.e.~on $\R$ to the restriction of an entire function of exponential type at most $2 \pi \Delta$.

\smallskip

The so-called Beurling-Selberg extremal problem in approximation theory is concerned with finding one-sided bandlimited approximations to a given function $f:\R \to \R$, in a way that $L^1(\R)$-error is minimized. Our first lemma is a reproduction of \cite[Lemma 9]{CCM} due to Carneiro, Chirre, and Milinovich. It presents the extremal Beurling-Selberg majorants and minorants for the Poisson kernel. This construction is derived from the general Gaussian subordination framework of Carneiro, Littmann, and Vaaler \cite{CLV}.

\begin{lemma}[Extremal functions for the Poisson kernel]\label{lemma_extr_Poisson} Let $\beta >0$ be a real number and let $\Delta >0$ be a real parameter. Let $h_{\beta}:\R \to \R$ be defined as in \eqref{Def_Poisson_kernel_beta}. Then there is a unique pair of real entire functions $m_{\beta,\Delta}^{\pm}:\mathbb{C}\to\mathbb{C}$ satisfying the following properties:
	\begin{itemize}
	\item[(i)] The real entire functions $m_{\beta,\Delta}^{\pm}$ have exponential type at most $2\pi\Delta$.
	
	\smallskip
	
	\item[(ii)] The inequalities
	$$m_{\beta,\Delta}^{-}(x) \leq h_{\beta}(x) \leq m_{\beta,\Delta}^{+}(x)$$
hold pointwise for all $x \in \R$.

\smallskip

	\item[(iii)] Subject to conditions {\rm(i)} and {\rm (ii)}, the value of the integral 
$$ \int_{-\infty}^{\infty}\big\{m^{+}_{\beta,\Delta}(x)-m^{-}_{\beta,\Delta}(x)\big\}\,\dx$$
is minimized.
\end{itemize}

\smallskip

\noindent The functions $m_{\beta,\Delta}^{\pm}$ are even and verify the following additional properties:

\smallskip

	\begin{itemize}
	 \item[(iv)]The $L^1-$distances of $m_{\beta,\Delta}^{\pm}$ to $h_{\beta}$ are explicitly given by
	 \begin{equation*}
		\int_{-\infty}^{\infty}\big\{m^{+}_{\beta,\Delta}(x)-h_{\beta}(x)\big\}\,\dx=\dfrac{2\pi e^{-2\pi\beta\Delta}}{1-e^{-2\pi\beta\Delta}}
		\end{equation*}
		and
		\begin{equation*}
		\int_{-\infty}^{\infty}\big\{h_{\beta}(x)-m^{-}_{\beta,\Delta}(x)\big\}\,\dx=\dfrac{2\pi e^{-2\pi\beta\Delta}}{1+e^{-2\pi\beta\Delta}}.
		\end{equation*}
		
		\item[(v)] The Fourier transforms of $m_{\beta,\Delta}^{\pm}$ are even continuous functions supported on the interval $[-\Delta,\Delta]$ and given by $($for $|\xi| \leq \Delta$$)$
		\begin{equation} \label{FT_maj_min_Poisson}
		\widehat{m}_{\beta,\Delta}^{\pm}(\xi) = \pi \left(\dfrac{e^{2\pi\beta(\Delta - |\xi|)}-e^{-2\pi\beta(\Delta-|\xi|)}}{\left(e^{\pi\beta\Delta}\mp e^{-\pi\beta\Delta}\right)^2}\right).
		\end{equation}
		
		\smallskip
		
		\item[(vi)] The functions $m_{\beta,\Delta}^{\pm}$ are explicitly given by
		\begin{equation*}
		m_{\beta,\Delta}^{\pm}(z) = \left(\frac{\beta}{\beta^2 + z^2}\right) \left( \frac{e^{2\pi \beta \Delta} +  e^{-2\pi \beta \Delta} - 2\cos(2\pi\Delta z)}{\left(e^{\pi\beta\Delta}\mp e^{-\pi\beta\Delta}\right)^2} \right).
		\end{equation*}
In particular, the function $m_{\beta,\Delta}^{-}$ is non-negative on $\R$.	

\smallskip

	\item[(vii)] Assume that $0 < \beta \leq 1$ and $\Delta \geq 1$. For any real number $x$ we have
	\begin{equation}\label{decay_square_maj_Poisson}
	 0 < m_{\beta,\Delta}^{-}(x) \leq h_{\beta}(x) \leq  m_{\beta,\Delta}^{+}(x) \ll \frac{1}{\beta(1 + x^2)},
	 \end{equation}
and, for any complex number $z=x+iy$, we have
		\begin{align}\label{Complex_est_maj}
		\big|m_{\beta,\Delta}^{+}(z)\big|\ll \frac{\Delta^{2}e^{2\pi\Delta|y|}}{\beta(1+\Delta|z|)} 
		\end{align}
		and
		\begin{align}\label{Complex_est_min}
		\big|m_{\beta,\Delta}^{-}(z)\big|\ll \dfrac{\beta \Delta^{2}e^{2\pi\Delta|y|}}{1+\Delta|z|},   
		\end{align}
where the constants implied by the $\ll$ notation are universal.	

\end{itemize}
\end{lemma}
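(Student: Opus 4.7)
The plan is to deduce the lemma from the Gaussian subordination framework of Carneiro--Littmann--Vaaler \cite{CLV}. The key analytic identity is the representation of the Poisson kernel as a positive superposition of Gaussians,
\[
h_\beta(x) = \frac{\beta}{\beta^2+x^2} = \pi\beta \int_0^\infty e^{-\pi \beta^2 \lambda}\, e^{-\pi \lambda x^2}\,\d\lambda,
\]
so that $h_\beta = \int_0^\infty g_\lambda \,\d\nu_\beta(\lambda)$ for an explicit positive Borel measure $\nu_\beta$ on $(0,\infty)$, where $g_\lambda(x) = e^{-\pi \lambda x^2}$. The strategy is to solve the extremal problem fiberwise for each Gaussian and then integrate against $\nu_\beta$.

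The Carneiro--Vaaler solution of the Beurling--Selberg extremal problem for the Gaussian provides, for each $\lambda>0$, real entire majorants and minorants $K^{\pm}_{\lambda,\Delta}$ of exponential type at most $2\pi \Delta$, given by explicit interpolation series at the half-integer multiples of $1/(2\Delta)$, that minimize $\int_{\R}(K^{+}_{\lambda,\Delta} - K^{-}_{\lambda,\Delta})\,\dx$. Setting
\[
m^{\pm}_{\beta,\Delta}(z) \: \int_0^\infty K^{\pm}_{\lambda,\Delta}(z)\,\d\nu_\beta(\lambda),
\]
a Fubini/Paley--Wiener argument shows each $m^{\pm}_{\beta,\Delta}$ is real entire of exponential type at most $2\pi \Delta$, giving (i); the pointwise bounds $K^{-}_{\lambda,\Delta}\le g_\lambda \le K^{+}_{\lambda,\Delta}$ survive positive integration, giving (ii). Interchanging the Fourier transform with the $\lambda$-integral and summing the resulting geometric series of Gaussian Fourier transforms produces the closed form in (v), and setting $\xi=0$ in $\widehat{m}^{\pm}_{\beta,\Delta}(\xi)-\widehat{h}_\beta(\xi)$ gives the $L^1$-distances in (iv). The explicit expression in (vi) follows by summing the Gaussian interpolation series in $\lambda$; the non-negativity of $m^{-}_{\beta,\Delta}$ on $\R$ is then immediate from the factor $e^{2\pi\beta\Delta}+e^{-2\pi\beta\Delta}-2\cos(2\pi\Delta x)\ge 0$. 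The estimates in (vii) come from (vi) via $|\cos(2\pi\Delta z)|\le e^{2\pi\Delta|y|}$ and the routine denominator bound $|\beta^2+z^2|\gtrsim (1+|z|)^2/\beta^2$ on the real axis, with the appropriate signed modifications in the complex plane.

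The main obstacle is the optimality statement (iii), since fiberwise optimality does not automatically lift to the integrated pair. One argues via a Poisson summation duality: any admissible competitor $M^\pm$ satisfies $M^+ - h_\beta \ge 0 \ge M^- - h_\beta$ at the interpolation nodes $\tfrac{1}{2\Delta}\Z$, where $m^{\pm}_{\beta,\Delta}-h_\beta$ has double zeros inherited from the Gaussian construction. Poisson summation (applied to $(M^+-M^-)$ against a Schwartz molifier concentrated on $[-\Delta,\Delta]$) then converts $\int(M^+-M^-)\,\dx$ into a sum over these nodes bounded below by $\int(m^{+}_{\beta,\Delta}-m^{-}_{\beta,\Delta})\,\dx$, and uniqueness follows from the strict convexity of the constrained problem. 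This is the specialization of \cite[Theorem~1]{CLV} to the Poisson subordinator $\nu_\beta$.
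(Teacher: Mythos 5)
Your approach is correct and coincides with the paper's: the paper does not prove this lemma itself but simply cites \cite[Lemma 9]{CCM}, which is obtained exactly via the Gaussian subordination framework of Carneiro--Littmann--Vaaler \cite{CLV} that you invoke (representing $h_\beta$ as the positive $\lambda$-mixture $h_\beta = \pi\beta\int_0^\infty e^{-\pi\beta^2\lambda}e^{-\pi\lambda x^2}\,\d\lambda$, integrating the fiberwise extremal Gaussian functions, and deferring optimality and the closed forms to the general CLV theorem). One small inaccuracy worth flagging: in the even case the extremal majorant and minorant interpolate $h_\beta$ on two \emph{different} shifted lattices (roughly $\tfrac{1}{\Delta}\Z$ vs.\ $\tfrac{1}{\Delta}(\Z+\tfrac12)$), not a common set $\tfrac{1}{2\Delta}\Z$, and the standard optimality argument applies Poisson summation directly to the nonnegative, integrable, exponential-type-$2\pi\Delta$ functions $M^+-h_\beta$ and $h_\beta-M^-$ separately, with no mollifier needed; this does not affect the substance since you are correctly deferring to \cite[Theorem 1]{CLV}.
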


\noindent {\sc Remark.} Part (vii) of the the previous lemma is stated for $0 < \beta \leq \frac12$ in \cite[Lemma 9]{CCM}, but in fact it works as well for any $0 < \beta \leq \beta_0$, with the implied constants depending on such $\beta_0$. Here we simply take $\beta_0=1$.

\subsection{Explicit formula} Our next lemma is an (unconditional) explicit formula that connects the non-trivial zeros of $\zeta(s)$ and the prime numbers. 

\begin{lemma}[Guinand-Weil explicit formula] \label{GW}
Let $h(s)$ be analytic in the strip $|\im{s}|\leq \tfrac12+\varepsilon$ for some $\varepsilon>0$, and assume that $|h(s)|\ll(1+|s|)^{-(1+\delta)}$ for some $\delta>0$ when $|\re{s}|\to\infty$. Then
	\begin{align*}
	\displaystyle\sum_{\rho}h\left(\frac{\rho - \tfrac12}{i}\right) & = h\left(\dfrac{1}{2i}\right)+h\left(-\dfrac{1}{2i}\right)-\dfrac{1}{2\pi}\widehat{h}(0)\log\pi+\dfrac{1}{2\pi}\int_{-\infty}^{\infty}h(u)\,\re{\dfrac{\Gamma'}{\Gamma}\!\left(\dfrac{1}{4}+\dfrac{iu}{2}\right)}\,\du \\
	 &  \ \ \ \ \ \ \ \ \ \ \ \ \ -\dfrac{1}{2\pi}\displaystyle\sum_{n\geq2}\dfrac{\Lambda(n)}{\sqrt{n}}\left(\widehat{h}\left(\dfrac{\log n}{2\pi}\right)+\widehat{h}\left(\dfrac{-\log n}{2\pi}\right)\right). 
	\end{align*}
\end{lemma}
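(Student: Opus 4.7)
The plan is to derive the identity by rectangular contour integration of $G(s) := h\bigl((s - \tfrac12)/i\bigr)$ against $\zeta'/\zeta(s)$. The substitution sends the horizontal strip $|\im s| \le \tfrac12 + \varepsilon$ of analyticity of $h$ to the vertical strip $-\varepsilon \le \re s \le 1 + \varepsilon$, and the decay hypothesis becomes $|G(\sigma + it)| \ll (1+|t|)^{-(1+\delta)}$ uniformly in $\sigma$ in that strip. First I would fix $\varepsilon_0 \in (0,\varepsilon)$ and let $R_T$ be the rectangle with vertices $-\varepsilon_0 \pm iT$ and $1+\varepsilon_0 \pm iT$, with $T$ chosen through a sequence avoiding the ordinates of nontrivial zeros.

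By the residue theorem, $\tfrac{1}{2\pi i}\oint_{\partial R_T} G(s)\tfrac{\zeta'}{\zeta}(s)\, ds$ equals $\sum_{|\gamma|<T} G(\rho) - G(1)$, where $-G(1) = -h(\tfrac{1}{2i})$ comes from the simple pole of $\zeta$ at $s = 1$ (residue of $\zeta'/\zeta$ equals $-1$ there). The two horizontal pieces vanish as $T\to\infty$ along the chosen sequence, using $|\zeta'/\zeta(\sigma+iT)| \ll \log^2 T$ on such a sequence together with the decay of $G$. Rearranging,
\[
\sum_{\rho} h\!\left(\frac{\rho-\tfrac12}{i}\right) = h\!\left(\frac{1}{2i}\right) + \frac{1}{2\pi i}\bigl(\mathcal R - \mathcal L\bigr),
\]
where $\mathcal R$ and $\mathcal L$ denote the upward vertical integrals on $\re s = 1+\varepsilon_0$ and $\re s = -\varepsilon_0$, respectively.

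Next I would evaluate $\mathcal R$ by inserting the absolutely convergent Dirichlet series $\zeta'(s)/\zeta(s) = -\sum_{n\ge 2}\Lambda(n) n^{-s}$, interchanging sum and integral, and shifting each term's contour to $\re s = \tfrac12$ (permissible since $n^{-s} G(s)$ is analytic in the strip); Fourier inversion on the critical line, where $G(\tfrac12+iu)=h(u)$, then yields $\mathcal R = -i\sum_n \Lambda(n)\, n^{-1/2}\,\widehat h(\log n/2\pi)$. For $\mathcal L$, I would apply the functional equation $\zeta'/\zeta(s) = \chi'/\chi(s) - \zeta'/\zeta(1-s)$ with $\chi(s) = \pi^{s-1/2}\Gamma(\tfrac{1-s}{2})/\Gamma(\tfrac{s}{2})$. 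The $-\zeta'/\zeta(1-s)$ contribution, after the substitution $w = 1-s$, becomes a Dirichlet-series integral on $\re w = 1+\varepsilon_0$ of the same type as $\mathcal R$, producing the companion contribution $+i\sum_n \Lambda(n)\, n^{-1/2}\,\widehat h(-\log n/2\pi)$. The $\chi'/\chi$ contribution is evaluated by shifting from $\re s = -\varepsilon_0$ to $\re s = \tfrac12$; the shift crosses the simple pole of $\chi'/\chi$ at $s=0$ (a zero of $\chi$ inherited from the pole of $\Gamma(s/2)$, with residue $+1$), producing the extra term $-2\pi i\, h(-\tfrac{1}{2i})$, while the integral on the critical line evaluates via the identity $\chi'/\chi(\tfrac12 + iu) = \log\pi - \re\, \Gamma'/\Gamma(\tfrac14 + iu/2)$ to $i\log\pi\,\widehat h(0) - i\int h(u)\re\, \Gamma'/\Gamma(\tfrac14+iu/2)\,du$. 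Dividing $\mathcal R - \mathcal L$ by $2\pi i$ and collecting terms reproduces the right-hand side of the claimed identity.

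The main obstacle I expect is not any individual step but the verification that all contour shifts and interchanges of sum and integral are justified under the relatively mild decay hypothesis $|h(s)|\ll (1+|s|)^{-(1+\delta)}$: one must choose the sequence $T_k\to\infty$ carefully so that the horizontal pieces actually vanish, and one must track absolute convergence of the prime-power sum. If any of this bookkeeping becomes delicate, a clean escape is to first establish the identity for $h$ whose Fourier transform has compact support (making the prime sum finite and the interchanges trivially valid), and then extend by density using uniform continuity of both sides of the identity in an appropriate weighted norm.
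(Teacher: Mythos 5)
Your proof is correct in substance, but it takes a genuinely different route from the paper. The paper's entire proof is a one-line citation: Lemma~\ref{GW} is obtained by specializing the general explicit formula for $L$-functions (Iwaniec--Kowalski, Theorem~5.12) to $\zeta(s)$. You instead give a self-contained contour-integration derivation: integrate $G(s)\zeta'/\zeta(s)$ with $G(s)=h\bigl((s-\tfrac12)/i\bigr)$ around a tall rectangle, collect the residues at the nontrivial zeros and at $s=1$, then process the two vertical edges --- the right one via the Dirichlet series for $\zeta'/\zeta$, the left one via the functional equation $\zeta'/\zeta(s)=\chi'/\chi(s)-\zeta'/\zeta(1-s)$, picking up the pole of $\chi'/\chi$ at $s=0$ to produce the $h(-\tfrac{1}{2i})$ term and using $\chi'/\chi(\tfrac12+iu)=\log\pi-\re\,\Gamma'/\Gamma(\tfrac14+iu/2)$ to produce the $\log\pi$ and gamma-integral terms. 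That is exactly the classical proof behind the cited theorem; the paper's citation is more economical and keeps the argument at the level of the general framework, while your version is transparent and checkable line by line for the specific case of $\zeta$.

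Two small remarks on the bookkeeping you flagged as potentially delicate. First, the decay hypothesis $|h(s)|\ll(1+|s|)^{-(1+\delta)}$ in the full strip $|\im s|\le\tfrac12+\varepsilon$ already gives, by shifting the contour in the definition of $\widehat h$, the exponential decay $|\widehat h(\xi)|\ll e^{-2\pi(\tfrac12+\varepsilon')|\xi|}$ for any $\varepsilon'<\varepsilon$; this makes the prime-power sum absolutely convergent (indeed bounded by $\sum_n\Lambda(n)n^{-1-\varepsilon'}$), so your density/compact-support fallback is not needed, though it would also work. Second, the same polynomial decay of $h$ on $\R$ together with the zero-counting estimate $N(T+1)-N(T)\ll\log T$ makes the zero sum converge absolutely, which is what justifies taking $T\to\infty$ along your chosen subsequence and then dropping the restriction $|\gamma|<T$.
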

\begin{proof} The proof follows from \cite[Theorem 5.12]{IK}. 
\end{proof}

\subsection{Estimates for zeta on the critical strip} Our next lemma provides bounds for the real part of $\zeta'(s)/\zeta(s)$ on the critical strip, under RH. Since these inequalities might be of independent interest, we include both upper and lower bounds for completeness. 

\begin{lemma}\label{th1} Assume RH. For $t\ge 10$, $x\ge 2$, and $0< \beta \le 1$, we have
\begin{align}\label{20231128_18:27}
\begin{split}
&- \left(\frac{1 }{x^{\beta}-1}\right) \log t+ \frac{2 \,x^\beta}{(x^\beta-1)^2} \ \mathrm{Re} \sum_{n\le x} \frac{\Lambda(n)}{n^{1/2+it}} \sinh\!\Big(\beta \log\frac{x}{n} \Big)  
 + O\!\left( \frac{1}{\beta} \left(\frac{\sqrt{x}\log x}{t}+1\right) \right)
 \\
 & \qquad \  \leq -\mathrm{Re} \frac{\zeta'}{\zeta}(\tfrac{1}{2}+\beta+it) \\
 &  \qquad \qquad \ \  \le   \left(\frac{1 }{x^{\beta}+1}\right) \log t+ \frac{2 \,x^\beta}{(x^\beta+1)^2} \ \mathrm{Re} \sum_{n\le x} \frac{\Lambda(n)}{n^{1/2+it}} \sinh\!\Big(\beta \log\frac{x}{n} \Big)  
 + O\!\left( \frac{\beta \sqrt{x}\log x}{t} +1 \right).
\end{split}
\end{align}
\end{lemma}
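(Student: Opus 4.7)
The plan is to combine the partial-fraction identity \eqref{partial_fraction_dec} with the Guinand–Weil formula (Lemma \ref{GW}) applied to the extremal majorants and minorants $m^{\pm}_{\beta,\Delta}$ of Lemma \ref{lemma_extr_Poisson}, taking the bandwidth $\Delta = (\log x)/(2\pi)$ so that property (v) of that lemma automatically truncates the arising Dirichlet polynomial at $n \le x$. The pointwise sandwich $m^{-}_{\beta,\Delta} \le h_\beta \le m^{+}_{\beta,\Delta}$ from part (ii), together with \eqref{partial_fraction_dec}, shows that an upper bound on $-\mathrm{Re}\,\zeta'/\zeta$ requires a lower bound on $\sum_\gamma h_\beta(t-\gamma)$ — obtained by using the minorant $m^{-}$ — and symmetrically for the lower bound on $-\mathrm{Re}\,\zeta'/\zeta$, so the whole problem reduces to evaluating $\sum_\gamma m^{\pm}(t-\gamma)$.

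Next, apply Guinand–Weil to $h(z) := m^{\pm}_{\beta,\Delta}(t-z)$, an even real entire function of exponential type $2\pi\Delta = \log x$. With the chosen $\Delta$, formula \eqref{FT_maj_min_Poisson} collapses to
\[
\widehat{m}^{\pm}_{\beta,\Delta}\!\left(\tfrac{\log n}{2\pi}\right) \;=\; \frac{2\pi\, x^{\beta}\,\sinh\!\big(\beta\log(x/n)\big)}{(x^{\beta} \mp 1)^{2}}
\]
for $n \le x$ (and vanishes otherwise), while part (iv) gives $\widehat{m}^{\pm}(0) = \pi(x^{\beta}\pm 1)/(x^{\beta}\mp 1)$. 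The translation by $t$ contributes a factor $n^{-it}$ on the Fourier side, and after symmetrizing the prime-power contribution in Guinand–Weil (using $\cos(t\log n) = \mathrm{Re}\, n^{-it}$) one obtains precisely $-\frac{2x^{\beta}}{(x^{\beta}\mp 1)^{2}}\,\mathrm{Re}\sum_{n\le x}\frac{\Lambda(n)}{n^{1/2+it}}\sinh(\beta\log(x/n))$, which is the Dirichlet polynomial in \eqref{20231128_18:27}. For the digamma integral I would use Stirling in the form $\mathrm{Re}\frac{\Gamma'}{\Gamma}(\tfrac14+\tfrac{iu}{2}) = \log(|u|/2) + O(1/|u|)$ together with the concentration of $m^{\pm}$ near $u = t$ from \eqref{decay_square_maj_Poisson} to extract the main term $\tfrac{1}{2\pi}\widehat{m}^{\pm}(0)\log(t/2)$, which combines with $-\tfrac{1}{2\pi}\widehat{m}^{\pm}(0)\log\pi$ to yield $\tfrac12\frac{x^\beta\pm 1}{x^\beta\mp 1}\log(t/(2\pi))$. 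Substituting back into \eqref{partial_fraction_dec} and using the identities $1 - \frac{x^\beta-1}{x^\beta+1} = \frac{2}{x^\beta+1}$ and $1 - \frac{x^\beta+1}{x^\beta-1} = -\frac{2}{x^\beta-1}$ produces exactly the stated coefficients of $\log t$.

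The main obstacle is the careful error bookkeeping, which is what explains the asymmetry between the two error terms in \eqref{20231128_18:27}. The edge terms $h(\pm\tfrac{1}{2i}) = m^{\pm}(t \pm \tfrac{i}{2})$ are controlled by the complex estimates \eqref{Complex_est_maj} and \eqref{Complex_est_min}: with $\Delta = (\log x)/(2\pi)$ and $|y|=\tfrac12$ we have $e^{2\pi\Delta|y|} = \sqrt{x}$ and $\Delta/(1+\Delta t) \lesssim (\log x)/t$, producing $|m^{+}(t\pm i/2)| \ll \sqrt{x}(\log x)/(\beta t)$ and $|m^{-}(t\pm i/2)| \ll \beta\sqrt{x}(\log x)/t$, which is exactly the source of the two different error shapes. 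A parallel asymmetry appears in the tail of the digamma integral (bounded using \eqref{decay_square_maj_Poisson}), and the discrepancy $\log(2\pi)/(x^\beta \mp 1)$ incurred when replacing $\log(t/(2\pi))$ by $\log t$ is $O(1/\beta)$ under $x \ge 2$, which fits cleanly inside the lower-bound error. One minor technicality is that $h(z)$ only decays like $|z|^{-1}$ in the horizontal strip, slightly weaker than the $(1+|z|)^{-(1+\delta)}$ hypothesis of Lemma \ref{GW}; this is routinely handled by a truncation or mollification argument as in \cite{CCM}.
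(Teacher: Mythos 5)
Your proposal reproduces the paper's argument essentially step for step: the same combination of the partial-fraction formula \eqref{partial_fraction_dec}, the extremal sandwich from Lemma \ref{lemma_extr_Poisson} with $x=e^{2\pi\Delta}$, and Guinand--Weil (Lemma \ref{GW}), with the same term-by-term asymptotics and the same asymmetric error bookkeeping between $m^+$ and $m^-$. The decay technicality you flag at the end is handled in the paper by invoking Phragm\'en--Lindel\"of rather than truncation or mollification (and in fact the closed form for $m^{\pm}_{\beta,\Delta}$ in part (vi) of Lemma \ref{lemma_extr_Poisson} directly gives $(1+|z|)^{-2}$ decay in any bounded horizontal strip), but this is a cosmetic difference and the overall approach is identical.
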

\begin{proof} To  simplify notation, let $m^{\pm}_{\Delta} = m^{\pm}_{\beta,\Delta}$ and let $x=e^{2\pi \Delta}$.
From \eqref{partial_fraction_dec} and property (ii) of Lemma \ref{lemma_extr_Poisson}, we have
\begin{equation}\label{1st_ineq}
 - \dfrac{1}{2}\log\frac{t}{2\pi} + \sum_{\gamma}m^{-}_{\Delta}(t-\gamma)+O\bigg(\frac{1}{t}\bigg) \leq \mathrm{Re} \, \frac{\zeta'}{\zeta}(\tfrac{1}{2}+\beta+it) \leq - \dfrac{1}{2}\log\frac{t}{2\pi} + \sum_{\gamma}m^{+}_{\Delta}(t-\gamma)+O\bigg(\frac{1}{t}\bigg).
\end{equation}
For a fixed $t >0$, let $\ell^{\pm}_{\Delta}(z):=m^{\pm}_{\Delta}(t-z)$ so that $\widehat{\ell}^{\pm}_{\Delta}(\xi)=\widehat{m}^{\pm}_{\Delta}(-\xi)e^{-2\pi i\xi t}$ and the condition $|\ell^{\pm}_{\Delta}(s)|\ll(1+|s|)^{-2}$ when $|\re{s}|\to\infty$ in the strip $|\im{s}|\leq 1$ follows from \eqref{decay_square_maj_Poisson}, \eqref{Complex_est_maj}, \eqref{Complex_est_min}, and an application of the Phragm\'{e}n-Lindel\"{o}f principle. Recalling that $\widehat{m}^{\pm}_{\Delta}$ are even functions, we apply the Guinand-Weil explicit formula (Lemma \ref{GW}) and find that
\begin{align}\label{GW_applied_to_m}
\begin{split}
\displaystyle\sum_{\gamma}m^{\pm}_{\Delta}(t-\gamma) & = 
\Big\{m^{\pm}_{\Delta}\left(t-\tfrac{1}{2i}\right)+m^{\pm}_{\Delta}\left(t+\tfrac{1}{2i}\right)\Big\}-\dfrac{1}{2\pi}\widehat{m}^{\pm}_{\Delta}(0)\log\pi \\ 
     & \ \ \ \ + \dfrac{1}{2\pi}\int_{-\infty}^{\infty}m^{\pm}_{\Delta}(t-y)\,\re\,\dfrac{\Gamma'}{\Gamma}\bigg(\dfrac{1}{4}+\dfrac{iy}{2}\bigg) \,\dy - \frac{1}{\pi}\sum_{n\geq 2}\dfrac{\Lambda(n)}{\sqrt{n}} \,\widehat{m}^{\pm}_{\Delta}\left(\dfrac{\log n}{2\pi}\right) \cos(t \log n). 
\end{split}
\end{align}
We now proceed with an asymptotic analysis of each term on the right-hand side of \eqref{GW_applied_to_m}.

\subsubsection{First term} From \eqref{Complex_est_maj} and \eqref{Complex_est_min}, for $t\ge 10$ and $x\ge 2$, we see that
\begin{align}\label{AsA_eq1}
\Big|m^{+}_{\Delta}\left(t-\tfrac{1}{2i}\right)+m^{+}_{\Delta}\left(t+\tfrac{1}{2i}\right)\Big| \ll \frac{\Delta^{2}e^{\pi\Delta}}{\beta(1+\Delta t)} \ll \frac{\sqrt{x}\log x}{\beta \, t} 
\end{align}
and
\begin{align}\label{AsA_eq1_min}
\Big|m^{-}_{\Delta}\left(t-\tfrac{1}{2i}\right)+m^{-}_{\Delta}\left(t+\tfrac{1}{2i}\right)\Big| \ll \frac{\beta \Delta^{2}e^{\pi\Delta}}{1+\Delta t} \ll \frac{\beta \sqrt{x}\log x}{ t} .
\end{align}

\subsubsection{Second term} From \eqref{FT_maj_min_Poisson}, it follows that
\begin{equation}\label{conclusion_FT_at_zero_Poisson_maj}
\begin{split}
\widehat{m}^{+}_{\Delta}(0) &= \pi\left( \frac{e^{\pi \beta \Delta} + e^{-\pi \beta \Delta} }{e^{\pi \beta \Delta} - e^{-\pi \beta \Delta} }\right) =  \pi\left( \frac{x^{\beta/2} + x^{-\beta/2}  }{ x^{\beta/2} - x^{-\beta/2}  }\right) =  \pi\left( \frac{x^{\beta} + 1 }{ x^{\beta} -1  }\right)
\end{split}
\end{equation}
and
\begin{equation}\label{conclusion_FT_at_zero_Poisson_min}
\begin{split}
\widehat{m}^{-}_{\Delta}(0) = \pi\left( \frac{e^{\pi \beta \Delta} - e^{-\pi \beta \Delta} }{e^{\pi \beta \Delta} + e^{-\pi \beta \Delta} }\right) =  \pi\left( \frac{x^{\beta/2} - x^{-\beta/2}  }{ x^{\beta/2} + x^{-\beta/2}  }\right) = \pi\left( \frac{x^{\beta} - 1 }{ x^{\beta} +1  }\right).
\end{split}
\end{equation}

\subsubsection{Third term} Recall that the Poisson kernel $h_{\beta}$ defined in \eqref{Def_Poisson_kernel_beta} satisfies $\int_{-\infty}^{\infty} h_{\beta}(y)\,\dy=\pi $. Note also that for $0 < \beta \leq 1$ and $|y| \geq 1$ we have
\begin{equation}\label{h beta}
h_{\beta}(y) = \frac{\beta}{\beta^2 + y^2} \leq \frac{1}{1 + y^2}.
\end{equation}
Hence, from \eqref{decay_square_maj_Poisson} and \eqref{h beta}, we get
\begin{align}\label{aux_pf_Poisson_eq1}
\begin{split}
0 & \leq \int_{-\infty}^{\infty}m^{-}_{\Delta}(y)\,\log(2+|y|) \,\dy \\
& \leq \int_{-\infty}^{\infty} h_{\beta}(y) \log (2 + |y|)\,\dy  = \int_{-1}^{1}h_{\beta}(y) \log (2 + |y|) \,\dy + \int_{|y| \ge 1}h_{\beta}(y) \log (2 + |y|) \,\dy = O(1).
\end{split} 
\end{align}
From Stirling's formula, \eqref{conclusion_FT_at_zero_Poisson_min}, and \eqref{aux_pf_Poisson_eq1}, for $t \geq 10$, it follows that 
\begin{align}\label{Conclusion_poisson_log_sum_min}
\begin{split}
\dfrac{1}{2\pi}\int_{-\infty}^{\infty}m^{-}_{\Delta}(t-y)\,\re\,\dfrac{\Gamma'}{\Gamma}\bigg(\dfrac{1}{4}+\dfrac{iy}{2}\bigg) \,\dy  &= \dfrac{1}{2\pi}\int_{-\infty}^{\infty}m^{-}_{\Delta}(y)\Big(\log \tfrac{t}{2}+ O(\log(2+|y|))\Big) \,\dy \\
& = \dfrac{1}{2} \left( \frac{x^{\beta} - 1 }{x^{\beta} + 1  }\right) \log \frac{t}{2} + O(1).  
\end{split}
\end{align}
Similarly, using Stirling's formula, \eqref{decay_square_maj_Poisson}, and \eqref{conclusion_FT_at_zero_Poisson_maj}, for $t \geq 10$, it follows that 
\begin{align}\label{Conclusion_poisson_log_sum_maj}
\begin{split}
\dfrac{1}{2\pi}\int_{-\infty}^{\infty}m^{+}_{\Delta}(t-y)\,\re\,\dfrac{\Gamma'}{\Gamma}\bigg(\dfrac{1}{4}+\dfrac{iy}{2}\bigg) \,\dy  &= \dfrac{1}{2\pi}\int_{-\infty}^{\infty}m^{+}_{\Delta}(y)\Big(\log \tfrac{t}{2}+ O(\log(2+|y|))\Big) \,\dy \\
& = \dfrac{1}{2} \left( \frac{x^{\beta} + 1 }{x^{\beta} - 1  }\right) \log \frac{t}{2} + O\Big(\frac{1}{\beta}\Big).  
\end{split}
\end{align}

\subsubsection{Fourth term} Now, to handle the sum over prime powers, recall that  $x = e^{2\pi \Delta}$ and note that the sum in \eqref{GW_applied_to_m} only runs over $n$ with $2 \leq n \leq x$. Using the explicit description for the Fourier transforms $\widehat{m}_{\Delta}^{\pm}$ given by \eqref{FT_maj_min_Poisson}, we get
\begin{equation}\label{Poisson_lem_expres_antes}
\begin{split}
\frac{1}{\pi}\sum_{n\geq 2}\dfrac{\Lambda(n)}{\sqrt{n}} \,\widehat{m}^{\pm}_{\Delta}\left(\dfrac{\log n}{2\pi}\right) \cos(t \log n) &= \frac{e^{2\pi \beta \Delta}}{\big(e^{2\pi \beta \Delta} \mp 1\big)^2} \ \mathrm{Re}\!\!\!\! \sum_{n\leq e^{2\pi\Delta}}\dfrac{\Lambda(n)}{n^{1/2+it}}\,\bigg(\dfrac{e^{2\pi\beta\Delta}}{n^\beta}-\dfrac{n^\beta}{e^{2\pi\beta\Delta}}\bigg)
\\
&=  \frac{ x^{\beta}}{\big( x^{\beta} \mp 1 \big)^2}\ \mathrm{Re} \sum_{n\leq x}\dfrac{\Lambda(n)}{n^{1/2+it}}\,\left\{ \Big(\dfrac{x}{n}\Big)^\beta-\Big(\dfrac{x}{n}\Big)^{-\beta} \right\}
\\
&= \frac{ 2\,x^{\beta}}{\big( x^{\beta} \mp 1 \big)^2} \ \mathrm{Re} \sum_{n\leq x}\dfrac{\Lambda(n)}{n^{1/2+it}}\, \sinh\!\Big( \beta \log \frac{x}{n} \Big).
\end{split}
\end{equation}

\subsubsection{Conclusion} To derive the bound on the right-hand side of \eqref{20231128_18:27}, we combine \eqref{1st_ineq}, \eqref{GW_applied_to_m}, \eqref{AsA_eq1_min}, \eqref{conclusion_FT_at_zero_Poisson_min}, \eqref{Conclusion_poisson_log_sum_min},  and \eqref{Poisson_lem_expres_antes}, to find that
\[
\begin{split}
\mathrm{Re} \frac{\zeta'}{\zeta}(\tfrac{1}{2}+\beta+it) &\ge  - \left(\frac{1 }{x^{\beta}+1}\right) \log\Big(\frac{t}{2\pi} \Big) \\
& \qquad \qquad-\frac{ 2\,x^{\beta}}{\big( x^{\beta} + 1 \big)^2} \ \mathrm{Re} \sum_{n\leq x}\dfrac{\Lambda(n)}{n^{1/2+it}}\, \sinh\!\Big( \beta \log \frac{x}{n} \Big)
 + O\!\left( \frac{\beta \sqrt{x}\log x}{ t} +1\right).
\end{split}
\]
To derive the bound on the left-hand side of \eqref{20231128_18:27}, we combine \eqref{1st_ineq}, \eqref{GW_applied_to_m}, \eqref{AsA_eq1}, \eqref{conclusion_FT_at_zero_Poisson_maj}, \eqref{Conclusion_poisson_log_sum_maj},  and \eqref{Poisson_lem_expres_antes}, to conclude that
\[
\begin{split}
\mathrm{Re} \frac{\zeta'}{\zeta}(\tfrac{1}{2}+\beta+it) &\le   \left(\frac{1 }{x^{\beta}-1}\right) \log\Big(\frac{t}{2\pi} \Big)  \\
&\qquad  \qquad - \frac{ 2\,x^{\beta}}{\big( x^{\beta} - 1 \big)^2} \ \mathrm{Re} \sum_{n\leq x}\dfrac{\Lambda(n)}{n^{1/2+it}}\, \sinh\!\Big( \beta \log \frac{x}{n} \Big) 
 + O\!\left( \frac{1}{\beta} \left(\frac{\sqrt{x}\log x}{t}+1\right) \right).
\end{split}
\]
This completes the proof of Lemma \ref{th1}.
\end{proof}

\section{Proof of Theorem \ref{new_Sound}}

For $t \ge 10$, observe that 
\begin{equation}\label{20231129_12:35}
\begin{split}
\log|\zeta(\tfrac{1}{2}+it)| &= -\int_{0}^{1} \re \frac{\zeta'}{\zeta}(\tfrac{1}{2}+ u + it) \, \mathrm{d}u + \log|\zeta(\tfrac32+it)| = -\int_{0}^{1} \re \frac{\zeta'}{\zeta}(\tfrac{1}{2}+ u + it) \, \mathrm{d}u + O( 1).
\end{split}
\end{equation}
Since
\[
\frac{\mathrm{d}}{\mathrm{d}u} \left[  \frac{\log\big(1+ x^{-u}\big)}{\log x} \right] = - \frac{ 1}{x^{u} + 1},
\]
we note that
\begin{align}\label{20231129_12:26}
 \log t \int_0^{1} \frac{ \mathrm{d}u }{x^{u} + 1} \le  \log t \int_0^{\infty} \frac{ \mathrm{d}u }{x^{u} + 1}  =  \log 2 \cdot \frac{\log t}{\log x}.
\end{align}

\smallskip

Applying the upper bound from Lemma \ref{th1} in \eqref{20231129_12:35}, and using \eqref{20231129_12:26}, we get 
\begin{equation}\label{20231228_17:31}
\begin{split}
\log|\zeta(\tfrac{1}{2}+it)| &\le   \log t \int_0^{1} \frac{ \mathrm{d}u} {x^{u} + 1}  +  \mathrm{Re} \sum_{n\leq x}\dfrac{\Lambda(n)}{n^{1/2+it}} W_0(n;x) + O\!\left( \frac{\sqrt{x}\log x}{t} +1\right)\\
& \leq \log 2 \cdot \frac{\log t}{\log x} +  \mathrm{Re} \sum_{n\leq x}\dfrac{\Lambda(n)}{n^{1/2+it}} W_0(n;x) + O\!\left( \frac{\sqrt{x}\log x}{t} +1\right),
\end{split}
\end{equation}
where
\begin{equation*}
W_0(n;x) = \int_0^{1} \frac{ 2\,x^{u}}{\big( x^{u} + 1 \big)^2} \sinh\!\Big( u \log \frac{x}{n} \Big) \, \mathrm{d}u.
\end{equation*}
Now observe that 
\begin{align*}
\begin{split}
W_0(n,x) &= \frac{1}{2} \int_0^{\infty}  \frac{  \sinh\!\Big( u \log \frac{x}{n} \Big)}{\cosh^2(\frac{u}{2}\log x)} \, \mathrm{d}u + O\!\left( \int_{1}^\infty \frac{\mathrm{d}u}{n^u} \right)  = \frac{1}{\log x} \int_0^{\infty}  \frac{  \sinh\!\Big( 2y  \frac{\log (x/n)}{\log x} \Big)}{\cosh^2(y)} \, \mathrm{d}y + O\!\left( \frac{1}{n\log n} \right)\\
&=  \frac{1}{\log x} \cdot F\!\left( \frac{\log(x/n)}{\log x} \right) +  O\!\left( \frac{1}{n\log n} \right),
\end{split}
\end{align*}
with $F$ as in \eqref{20231127_13:35}. Therefore
\begin{align}\label{20231206_14:16}
\mathrm{Re} \sum_{n\leq x}\dfrac{\Lambda(n)}{n^{1/2+it}} W_0(n;x)  = \mathrm{Re} \sum_{n\leq x}\dfrac{\Lambda(n)}{n^{1/2+it} \log x} F\!\left( \frac{\log(x/n)}{\log x} \right)  + O(1). 
\end{align}
Theorem \ref{new_Sound} now follows directly from \eqref{20231228_17:31} and \eqref{20231206_14:16}.

\section{Proof of Theorem \ref{lower-order}}

We now deduce Theorem \ref{lower-order} from Theorem \ref{new_Sound}.

\subsection{Setup}
Assuming RH, we have
\begin{equation}\label{prime_est}
\sum_{n\le x} \frac{\Lambda(n)}{\sqrt{n}} = 2 \sqrt{x} + O\big( \log^3 x\big). 
\end{equation}
Since $F(0)=0$ and $F(u) \ge 0$ for $u \in [0,1)$, using \eqref{prime_est} and summing by parts, it follows that
\[
\begin{split}
\bigg| \ \mathrm{Re} \sum_{n\leq x}\dfrac{\Lambda(n)}{n^{1/2+it} \log x} \, F\!\left( \frac{\log(x/n)}{\log x} \right) \, \bigg| \ &\le \ \frac{1}{\log x} \sum_{n\le x} \frac{\Lambda(n)}{\sqrt{n}} F\!\left( \frac{\log(x/n)}{\log x} \right)
\\
&= \frac{2}{\log x} \int_{\log 2/\log x}^1  F'( 1 - v) \, x^{v/2} \, \dv  + O\big( \log^2 x\big)
\\
&= \frac{2}{\log x} \int_{1/2}^1  F'( 1 - v) \, x^{v/2} \, \dv  + O\big( x^{1/4} \log x \big)
\\
&=  \frac{2 \sqrt{x} }{\log x} \int_{0}^{1/2}  F'(u) \, x^{-u/2} \, \du  + O\big( x^{1/4} \log x \big).
\end{split}
\]
Here we have used the fact that $F'(u)$ has a double pole at $u=1$ so that $|F(1-\frac{\log y}{\log x})| \ll \frac{\log^2 x}{\log^2 y}$ for $2\le y \le x$. Differentiating the series expansion for $F$ in \eqref{series} term-by-term, we see that
\[
F'(u) = 2 \log 2 + 2 \sum_{k=1}^\infty \left(1\!-\!\frac{1}{2^{2k}}\right) (2k\!+\!1) \, \zeta(2k\!+\!1) \, u^{2k}
\]
uniformly for $|u|\le \frac{1}{2}$. Notice that the coefficients of this series are positive. Hence, for any positive integer $K$, we have
\begin{equation}\label{est1}
\begin{split}
\bigg| \ \mathrm{Re} \sum_{n\leq x} &\dfrac{\Lambda(n)}{n^{1/2+it} \log x} \, F\!\left( \frac{\log(x/n)}{\log x} \right) \, \bigg|
\\ &\le \frac{(4 \log 2) \sqrt{x}}{\log x} \int_{0}^{1/2}  x^{-u/2} \, \du + \frac{4 \sqrt{x} }{\log x}  \sum_{k=1}^K \left(1\!-\!\frac{1}{2^{2k}}\right)  (2k\!+\!1) \, \zeta(2k\!+\!1) \, \int_{0}^{1/2}  u^{2k} \, x^{-u/2} \, \du  \\
& \quad \qquad + O_K\bigg( \frac{\sqrt{x} }{\log x}  \int_{0}^{1/2}  u^{2K+2} \, x^{-u/2} \, \du \bigg) + O\big( x^{1/4} \log x \big).
\end{split}
\end{equation}
For non-negative integers $k$, we observe that 
\begin{equation}\label{est2}
 \int_{0}^{1/2}  u^{2k} \, x^{-u/2} \, \du \le  \int_{0}^{\infty}  u^{2k} \, x^{-u/2} \, \du =  \frac{ 2^{2k+1} \, \Gamma(2k\!+\!1) }{(\log x)^{2k+1}} =  \frac{2^{2k+1} (2k)! }{(\log x)^{2k+1}}.
\end{equation}
Therefore, combining \eqref{main_ineq}, \eqref{est1}, and \eqref{est2}, it follows that
\begin{equation}\label{ineq2}
\begin{split}
\log\big|\zeta(\tfrac{1}{2}+it)\big| & \le \log 2 \cdot \frac{\log t}{\log x} + \frac{(8 \log 2) \sqrt{x}}{\log^2 x} + 8 \sqrt{x}  \sum_{k=1}^K \Big(2^{2k} \!-\! 1\Big) \,  \frac{ (2k\!+\!1)! \, \zeta(2k\!+\!1) }{(\log x)^{2k+2}}  
\\
& \qquad + O_K\bigg( \frac{\sqrt{x} }{(\log x)^{2K+4}}  \bigg).
\end{split}
\end{equation}

\subsection{Choosing $x$: initial approximations}
We roughly want to choose $\sqrt{x} \approx \log t$ in order to minimize the right-hand side of \eqref{ineq2}. If we make the exact choice $\sqrt{x} = \log t$, we obtain
\begin{equation}\label{20240102_13:27}
\begin{split}
\log\big|\zeta(\tfrac{1}{2}+it)\big| & \le \frac{\log 2}{2} \frac{\log t}{\log \log t} +  \frac{ 2 \log 2 \cdot \log t}{(\log \log t)^2} + 2 \log t \sum_{k=1}^K \Big(1\!-\!\frac{1}{2^{2k}}\Big)\,  \frac{ (2k\!+\!1)! \, \zeta(2k\!+\!1) }{(\log \log t)^{2k+2}}  
\\
& \qquad + O_K\bigg( \frac{\log t }{(\log \log t)^{2K+4}}  \bigg).
\end{split}
\end{equation}
We can make the second term on the right-hand side of this inequality smaller by taking $x$ to be slightly smaller. To this end, we choose
\begin{equation} \label{x size}
\log x = 2 \log \log t - 2c
\end{equation}
in \eqref{ineq2}, and find that
\begin{equation} \label{step1}
\begin{split}
\log\big|\zeta(\tfrac{1}{2}+it)\big| & \, \le \, \frac{\log 2}{2} \frac{\log t}{\log \log t} + \bigg( \frac{c \log 2}{2} + 2 \, e^{-c} \log 2 \bigg)  \frac{\log t}{(\log \log t)^2}  
\\
& \qquad \quad + \bigg( \frac{c^2 \log 2}{4} + 4 \, c \, e^{-c} \log 2 \bigg)  \frac{\log t}{(\log \log t)^3} + O_c\bigg( \frac{\log t }{(\log \log t)^{4}}  \bigg).
\end{split}
\end{equation}
The minimum value of the function $g(c) =\frac{c \log 2}{2} + 2 \, e^{-c} \log 2$
occurs at $c=2 \log 2$ with the minimum value equaling $\frac{\log 2}{2} + \log^22$. Choosing $c=2 \log 2$ in \eqref{step1}, we conclude that
\[
\begin{split}
\log\big|\zeta(\tfrac{1}{2}+it)\big| & \, \le \, \frac{\log 2}{2} \frac{\log t}{\log \log t} + \bigg(\frac{\log 2}{2} + \log^2 2\bigg) \frac{\log t }{(\log \log t)^2} 
 + \Big( 2 \log^2 2 + 2 \log^3 2 \Big) \frac{\log t }{(\log \log t)^3} 
 \\
& \quad \qquad + O\bigg( \frac{\log t }{(\log \log t)^{4}}  \bigg),
\end{split}
\]
arriving at the first three terms stated in Theorem \ref{lower-order}.
\subsection{The optimal choice of $x$} \label{Optimal_x}We now describe the optimal choice of $x$ in terms of $t$, as hinted by \eqref{x size}, but now with a complete power series expansion. The elegant argument presented in this subsection, and the companion Pari script, are due to Don Zagier \footnote{ In an earlier version of this paper we had a slightly different line of reasoning for this description, leading to the same coefficients.}.

\smallskip

We make the change of variables $w=\frac1{\log\log t}$ and  $z=\frac1{\log x}$.  Define
$$ B(w,z) \= Lz\t e^{1/w} \+ e^{1/2z}\t\sum_{m=0}^\infty a_m z^{m+1} $$
as in \eqref{ineq2}, where the coefficients $L$ and $a_m$ could be arbitrary numbers but in our case are given by 
$$ L=\log 2,\quad a_1=8L, \quad a_m=8\t(2^{m-1}-1)\t m!\t \zeta(m)\;\,\text{for $m>1$~odd},\quad a_m=0\;\,\text{for $m$ even}. $$
We want an extreme value of~$B$ for~$w$ fixed, so need  $w$ and $z$ related by
$$ 0 \= \frac{\partial B}{\partial z} \= L\t e^{1/w} \+ e^{1/2z}\t\sum_{m=0}^\infty \bigl((m+1)\t a_m \m\tf12\t a_{m+1}\bigr)\t z^m\,. $$
This can be rewritten in turn as
\begin{equation}\label{z_Don2}
e^{1/w} \= e^{1/2z}\t\sum_{m=0}^\infty b_m\t z^m,\qquad b_m\,:=\,\frac{\frac12\t a_{m+1}\m (m+1)\t a_m}L 
\end{equation}
or, taking logarithms, 
$$ \frac1w\= \frac1{2z} \+ \log b_0 \+ \log\left(1\+ \sum_{m=1}^\infty \frac{b_m}{b_0}\t z^m\right) $$
(with $b_0=4$ and $\log b_0=2L$ in our case) or, inverting in the sense of multiplication of power series,
$$  w \= 2z \m 4\t\log b_0\t z^2 \+ \left(8\t\log^2 b_0\m\frac{4b_1}{b_0}\right)\t z^3 \+\cdots $$
or, inverting in the sense of composition of power series,
\begin{equation}\label{z_Don}
z \= \frac12\t w \+ \frac{\log b_0}2\t w^2 \+ \left(\frac12\log^2 b_0 + \frac{b_1}{4b_0}\right)\t w^3 \+ \cdots  
\end{equation}
or, inverting once more in the sense of multiplication of power series,
$$  \frac1z \= \frac2w \m 2\tt\log b_0  \m \frac{b_1}{b_0}\t w \+ \cdots $$
(this generalizes \eqref{x size}), each time with explicitly computable coefficients that are polynomials in the numbers $a_m$, $1/L$, $\log b_0$, 
and $1/b_0$ (so, polynomials with rational coefficients in $(\log 2)^{\pm1}$ and $\zeta(2k+1)$ in our case).  Finally, substituting
the value of~$z$ as in \eqref{z_Don2} and \eqref{z_Don} into the function $B(w,z)$, we find
\begin{align*}
B_{\text{opt}}(w) &\= e^{1/w}\t\left(Lz \+ \sum_{m=0}^\infty a_m z^{m+1}\t\Big/\sum_{m=0}^\infty b_m\t z^m \right)  \\
  &\= e^{1/w}\t\left( \left(L+\frac{a_0}{b_0}\right)\t z \+ \left(\frac{a_1}{b_0}-\frac{a_0b_1}{b_0^2}\right)\t z^2 \+ \cdots\right)   \\
    &\= e^{1/w}\t\left(\frac12\t\left(L+\frac{a_0}{b_0}\right)\t w\+ \left(\frac14\t\left(\frac{a_1}{b_0}-\frac{a_0b_1}{b_0^2}\right)
       +\frac{\log b_0}2\t\left(L+\frac{a_0}{b_0}\right)\right)\t w^2 \+ C_3\tt w^3 \+ \cdots\right),  
\end{align*}       
which is the desired form proposed in Theorem \ref{lower-order}.

\smallskip

The following Pari script calculates the first coefficients of this series.

\begin{verbatim}
a(m) = [8*L,0,144*Z3,0,14400*Z5,0,2540160*Z7,0][m];\\def. of a_m for m=1,...,8; a_0 = 0
b(m) = (a(m+1)/2 - (m+1)*a(m))/L; 
w1 = 1/2/z + 2*L + log(1+ sum(m=1,6,b(m)/4*z^m,O(z^7))); Z = reverse(1/w1); \\ w1 = 1/w
B = L*Z + sum(m=1,8,a(m)*Z^(m+1),O(z^10))/sum(m=1,7,b(m)*Z^m,4+O(z^8)); 

\\ Time 7 ms.  Output:

w1 == 1/2/z + 2*L - 4*z + (-8 + 18*Z3/L)*z^2 + (-64/3 - 72*Z3/L)*z^3 + O(z^4) \\ true 
Z == 1/2*z + L*z^2 + (2*L^2 - 1)*z^3 + (4*L^3 - 6*L - 1 + 9/4*Z3/L)*z^4 + O(z^5) \\ true
coeff(B,1) == L/2
coeff(B,2) == L^2 + L/2
coeff(B,3) == 2*L^3 + 2*L^2 
coeff(B,4) == 4*L^4 + 6*L^3 - L + 9/4*Z3 
coeff(B,5) == 8*L^5 + 16*L^4 - 8*L^2 - 4/3*L + (18*L + 9/2)*Z3
coeff(B,6) == 16*L^6 + 40*L^5 - 40*L^3 - 40/3*L^2  + 4/3*L+ (90*L^2+45*L-9)*Z3 
      - 81/16*Z3^2/L + 225/4*Z5
coeff(B,7) == 32*L^7 + 96*L^6 - 160*L^4 - 80*L^3 + 16*L^2 + 34/5*L 
     + 45*(8*L^3 + 6*L^2 - 12/5*L - 1)*Z3 - 81/4*(3-1/L)*Z3^2 + (675*L + 225/2)*Z5
\end{verbatim}

\section*{Acknowledgements} We thank Don Zagier for the argument presented in \S \ref{Optimal_x}. EC thanks the organizers of the conference {\it Constructive Theory of Functions 2023}, in Lozenets, Bulgaria, for their warm hospitality. MBM was supported by the NSF grant DMS-2101912.

\end{document}